\numberwithin{equation}{section}
\def\CC{\mbox{$C\hspace{-.11in}\protect\raisebox{.5ex}{\tiny$/$}
\hspace{.06in}$}}
\def\h{\hspace*{.24in}}
\newtheorem{theorem}{Theorem}
\begin{document}
 \title{Note on potential theory for functions in Hardy classes}
\author{Tuyen Trung Truong}
    \address{Department of Mathematics, Indiana University Bloomington, IN 47405 USA}
 \email{truongt@indiana.edu}
\thanks{}
    \date{\today}
    \keywords{Blaschke products; Hardy classes; Potential Theory}
    \subjclass[2000]{30D15, 31A15, 44A10, 65F22.}
    \begin{abstract}
The purpose of this note is to show that the set functions defined in \cite{trong-tuyen} can be suitably extended to all subsets $E$ of the unit disk
$\mathbb{D}$. In particular we obtain uniform nearly-optimal estimates for the following quantity
\begin{eqnarray*}
D_p(E,\varepsilon, R) = \sup \{\sup_{|z| \leq R}|g(z)|: g\in H^p,~||g||_{H^p}\leq 1,~(1-|\zeta |)|g(\zeta)| \leq \varepsilon\;\forall \zeta\in E\}.
\end{eqnarray*}
\end{abstract}
\maketitle
\section{Introduction and results.}       
The (weighted) potential theory in $\mathbb{C}$ (see for example \cite{ransford} and \cite{Tsuji}) is a powerful tool used to obtain estimates about the
growth of subharmonic functions. This tool can also be applied to estimate the growth of analytic functions with great success. However in some problems
related to special classes of analytic functions (e.g. Hardy classes, Bergman classes) it seems that we need other tools to obtain better bounds.

In \cite{trong-tuyen}, when considering the stability problem for functions in Hardy classes, it is shown that other set functions defined there are more
fitter in order to obtain good estimates. Let us briefly review the notations and results in \cite{trong-tuyen}.

Let $E$ be a non-Blaschke subset of the unit disc $\mathbb{D}$ of the complex plane $\CC$, that is $E$ contains a sequence $(z_j)$ satisfying the
condition
\[
\displaystyle\sum_{j=1}^\infty(1-|z_j|) = \infty.
\]

Fixed $1\leq p\leq \infty$, recall that the Hardy space $H^p(\mathbb{D})$ is the space of all holomorphic functions $g$ on $\mathbb{D}$ for which
$\|g\|_p$ $<$ $\infty$, where
\begin{align*}
\|g\|_p &= \lim_{r\uparrow 1} \left\{\frac{1}{2\pi}\int_0^{2\pi}|g(re^{i\theta})|^p d\theta\right\}^{1/p}\h(1 \leq p < \infty),\\
\|g\|_{\infty} &= \lim_{r\uparrow 1} \sup_{\theta} |g(re^{i\theta})|.
\end{align*}
For convenience, from now on, we will denote $H^p(\mathbb{D})$ by $H^p$. We define $\mathcal{A}^p$ to be the functions in $H^p$ with norm $1$, that is
\begin{equation}
\mathcal{A}^p=\{f:~f\in H^p, ||f||_{H^p}=1\}.\label{EquationSpaceAp}
\end{equation}
If $f\in \mathcal{A}^p$ it follows that
\begin{equation}
|f(z)|\leq \frac{1}{(1-|z|^2)^{1/p}}, \label{EquationEstimateForFunctionsInSpaceAp}
\end{equation}
for all $z\in \mathbb{D}$.

In \cite{trong-tuyen}, the following problem is considered: Find good (both upper and lower) bounds for the quantity
\begin{equation}
C_p(E,\varepsilon, R) = \sup \{\sup_{|z| \leq R}|g(z)|: g\in \mathcal{A}^p,|g(\zeta)| \leq \varepsilon\;\forall \zeta\in E\}, \label{CeRdefinition}
\end{equation}
for positive $\varepsilon$ and $R$ in $(0, 1)$.

Let $E_0$ be the set of non-tangential limit points of $E$, that is  points $\zeta$ of $\partial \mathbb{D}$ being such that there exists a sequence
$(z_n)$ in $E$ which tends nontangentially to $\zeta$, that is, such that
\begin{eqnarray*}
z_n\rightarrow \zeta,~|z_n-\zeta|=O(1-|z_n|).
\end{eqnarray*}

Let $m(E_0)$ be the Lebesgue measure of $E_0$ as a subset of $\mathbb{D}$. If $m(E_0)>0$ then $C_p(E,\epsilon ,R)$ can be estimated using harmonic
measures of $E_0$ (see Appendix in \cite{trong-tuyen}). In order to deal with the case $m(E_0)=0$, some set functions $M_n(E)$ were introduced in Section
3 in \cite{trong-tuyen}, using weighted finite Blaschke products
\begin{eqnarray*}
B_q(Z_n,z)=q(z)\prod _{j=1}^n\frac{z-z_j}{1-\overline{z_j}z},
\end{eqnarray*}
where $Z_n=\{z_1,\ldots ,z_n\}$, and $q(z)$ is a function provided by a result of Hayman\cite{hay} satisfying the following two conditions:

i) $q(z)$ is analytic in $\mathbb{D}$

ii) $q(z)$ is bounded in $\mathbb{D}$ with sup-norm $1$, $q(z)\not= 0$ for all $z\in \mathbb{D}$, and
\begin{eqnarray*}
\lim _{z\in E,|z|\rightarrow 1}q(z)=0.
\end{eqnarray*}

The set functions $M_n(E)$ are analogous to the set functions defined in (weighted) potential theory in $\mathbb{D}$. If $E$ is a compact subset in
$\mathbb{D}$ then $M_n(E)$ can be defined using the potential theory in the unit disk as discussed in \cite{Tsuji}. However when $E$ is not relatively
compact in $\mathbb{D}$, it turns out that $M_n(E)$ are different from their counterparts in weighted potential theory for $\mathbb{D}$ (see Section 3 in
\cite{trong-tuyen} for more detailed).

The functions $M_n(E)$ as defined in \cite{trong-tuyen} requiring $q(z)$ to satisfy both conditions i) and ii) can not be extended to sets $E$ with
$m(E_0)>0$. That is because when $m(E_0)>0$, by the above mentioned result of Hayman, there is no function $q(z)$ satisfying both i) and ii). However if
we use any function $q(z)$ satisfying condition ii) but not necessarily analytic, we can still define set functions $M_n(E)$ as in Section 3 in
\cite{trong-tuyen}.

The most natural function $q(z)$ satisfying condition ii) is the function $q(z)=1-|z|$. Note that $q(z)$ is not analytic, however it is independent of
sets $E$. Hence we can define set functions $\widetilde{M}_n(E):=M_n(E)$ (we use the notation $\widetilde{M}$ to signify that we are using the special
function $q(z)=1-|z|$ in defining $M_n(E)$) for any subset $E$ of $\mathbb{D}$. It turns out that these sets functions allow us to obtain uniform
nearly-optimal estimates for a quantity similar to $C_p(E,\epsilon ,R)$. Let us state the result more explicitly in the below.

Fix from now on $q(z)=1-|z|$. Let $E$ be any non-Blaschke subset of the unit disc $\mathbb{D}$ of the complex plane $\CC$. Consider the following
quantity
\begin{equation}
D_p(E,\varepsilon, R) = \sup \{\sup_{|z| \leq R}|g(z)|: g\in \mathcal{A}^p,~(1-|\zeta |)|g(\zeta)| \leq \varepsilon\;\forall \zeta\in E\},
\label{DeRdefinition}
\end{equation}
for positive $\varepsilon$ and $R$ in $(0, 1)$. In view of (\ref{EquationEstimateForFunctionsInSpaceAp}), the quantity $D_p(E,\varepsilon, R)$ is natural
to be considered.

Define \begin{equation} g(E,\epsilon ,R,q)=\sup \{\sup _{|z|\leq R}|B(Z_n;z)|:~n\in \mathbb{N},~Z_n\in E^n,~|B_q(Z_n;\zeta )|\leq \epsilon~\forall \zeta
\in E\}.\label{EquationOfFunctionG}
\end{equation}
\begin{theorem}
There exists an absolute constant $\sigma >0$, and there exist constants $K,\alpha ,\epsilon _0>0$ depending only on $p$ and $R$ such that for all
$0<\epsilon <\epsilon _0$ and any non-Blaschke subset $E$ of $\mathbb{D}$
\begin{eqnarray*}
g(E,\epsilon ,R)\leq D_p(E,\epsilon ,R)\leq K \times g^{\alpha}(E,\epsilon ^{\sigma },R).
\end{eqnarray*}
\label{TheoremMain}\end{theorem}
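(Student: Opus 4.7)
The plan is to prove the two inequalities separately; the first is elementary and the second is where the real work lies. For the lower bound $g(E,\epsilon,R)\le D_p(E,\epsilon,R)$, observe that any finite Blaschke product $B(Z_n;\cdot)$ with $Z_n\subset E$ has $\|B(Z_n;\cdot)\|_{H^p}=1$ (since $|B|=1$ almost everywhere on $\partial\mathbb{D}$) and hence lies in $\mathcal{A}^p$. The choice $q(\zeta)=1-|\zeta|$ gives the identity $|B_q(Z_n;\zeta)|=(1-|\zeta|)|B(Z_n;\zeta)|$, so the constraint $|B_q(Z_n;\zeta)|\le\epsilon$ appearing in the definition \eqref{EquationOfFunctionG} of $g$ coincides with the constraint in the definition \eqref{DeRdefinition} of $D_p$. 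Hence $B(Z_n;\cdot)$ is admissible in the supremum defining $D_p$, and taking the supremum over such $Z_n$ yields the inequality.

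For the upper bound $D_p(E,\epsilon,R)\le K\,g^{\alpha}(E,\epsilon^{\sigma},R)$, fix $f\in\mathcal{A}^p$ with $(1-|\zeta|)|f(\zeta)|\le\epsilon$ on $E$, and let $z_0\in\{|z|\le R\}$ be a point nearly realizing $\sup_{|z|\le R}|f(z)|$. Following the scheme employed for $C_p$ in \cite{trong-tuyen}, the plan is to associate to $f$ a finite set $Z_n(f)\subset E$ and a corresponding finite Blaschke product $B(Z_n(f);\cdot)$ such that (i)~$|B_q(Z_n(f);\zeta)|\le \epsilon^{\sigma}$ for every $\zeta\in E$, so that $Z_n(f)$ is admissible for $g(E,\epsilon^{\sigma},R)$, and (ii)~$|f(z_0)|\le K\,|B(Z_n(f);z_0)|^{\alpha}$, which transfers the desired bound. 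The set $Z_n(f)$ will be constructed by a greedy selection: iteratively choose $z_j\in E$ at which $|f|$ nearly saturates $\epsilon/(1-|z_j|)$, divide $f$ by the M\"obius factor $(z-z_j)/(1-\overline{z_j}z)$, and repeat until the leftover has small $H^p$ norm; this is the analogue in our setting of the Blaschke-product construction underlying the set functions $M_n(E)$ in \cite{trong-tuyen}.

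The main obstacle is the pointwise comparison (ii) with matching exponents $\sigma,\alpha$. The basic strategy is to combine the inner-outer factorization $f=B_f S_f O_f$ (with $|S_f|\le 1$ and $O_f$ outer, $\|O_f\|_{H^p}=1$) with a potential-theoretic estimate: the Riesz decomposition of $\log|f|$ writes it as the Poisson integral of its boundary values (bounded in $L^p$ via $\|f\|_{H^p}=1$) minus a Green-type contribution from the zeros, while on $E$ the constraint forces $\log|f(\zeta)|\le \log\epsilon-\log(1-|\zeta|)$. A two-constants / harmonic-measure interpolation between the boundary bound and the bound at the selected points $z_j\in Z_n(f)$ then yields an inequality of the form $|f(z_0)|\le K\,|B(Z_n(f);z_0)|^{\alpha}$, with $\sigma$ and $\alpha$ arising from this interpolation and from the balance against the $\mathcal{A}^p$-growth rate $(1-|z|)^{-1/p}$ of \eqref{EquationEstimateForFunctionsInSpaceAp}. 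A minor technical point, absent from the corresponding argument for $C_p$ in \cite{trong-tuyen}, is that $q(z)=1-|z|$ is not holomorphic; this is harmless because $q$ enters only through its values on $E$ in constraint (i), and the Blaschke factor $B(Z_n;\cdot)$ itself is analytic, so the potential-theoretic estimates on $\{|z|\leq R\}$ go through unchanged.
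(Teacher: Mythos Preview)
Your lower-bound argument is correct and is exactly the easy direction: finite Blaschke products lie in $\mathcal{A}^p$, and with $q(\zeta)=1-|\zeta|$ the two constraints coincide.

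The upper-bound sketch, however, has a genuine gap and also diverges from the paper's route. The paper does \emph{not} build a point set $Z_n(f)$ adapted to a particular $f$. Instead it takes the Fekete-type extremal configuration underlying the set functions $\widetilde{M}_n(E)$, which is chosen independently of $f$; the key input is the uniform decay $\widetilde{M}_n(E)\le Cn^{-\sigma}$ for all non-Blaschke $E$ (Part~2 of Proposition~4 in \cite{trong-tuyen}), which is precisely what guarantees the admissibility condition $|B_q(Z_n;\zeta)|\le \epsilon^{\sigma}$ on $E$ once $n$ is tied to $\epsilon$. The transfer from $f$ to $B(Z_n;\cdot)$ then goes through the Lagrange-type kernels $c_{p,k}(Z_n,z)$ of formula~(2.2) in \cite{trong-tuyen}, which yield the pointwise estimates (4.1)--(4.2); those estimates only use the values $(1-|z_j|)|f(z_j)|$, so replacing $C_p$ by $D_p$ is automatic, and the proof of Theorem~4 there finishes.

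In your scheme both halves (i) and (ii) are unaccounted for. Your greedy rule selects $z_j\in E$ where $|f(z_j)|$ is near $\epsilon/(1-|z_j|)$, i.e.\ where $f$ is \emph{large}, so $f$ does not vanish there and ``divide $f$ by the M\"obius factor'' does not produce a holomorphic function; the iteration as written does not make sense. More seriously, nothing in the construction explains why the resulting $Z_n(f)$ should satisfy $|B_q(Z_n(f);\zeta)|\le\epsilon^{\sigma}$ for \emph{every} $\zeta\in E$; that admissibility is a statement about the geometry of $Z_n$ relative to all of $E$, and points chosen because $|f|$ is large there carry no such information. Finally, the two-constants/harmonic-measure step you invoke needs a bound for $\log|f|$ on a set of positive harmonic measure, whereas here the constraint lives only on the discrete set $E$ (and indeed $m(E_0)=0$ is the case of interest), so the interpolation cannot be carried out as stated. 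The missing idea is exactly the one the paper imports from \cite{trong-tuyen}: use the $f$-independent extremal points for $\widetilde{M}_n(E)$ together with its uniform $n^{-\sigma}$ decay to handle (i), and the interpolation kernels $c_{p,k}$ to handle (ii).
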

\begin{proof}
By proof of Part 2 of Proposition 4 in \cite{trong-tuyen}, there exists absolute constants $C,\sigma >0$ such that
\begin{eqnarray*}
\widetilde{M}_n(E)\leq Cn^{-\sigma }
\end{eqnarray*}
for any $n$ big enough. The inequality holds uniformly for all non-Blaschke sets $E$.

Now exploring the functions $c_{p,k}(Z_n,z)$ in formula (2.2) in \cite{trong-tuyen} shows that estimates (4.1) and (4.2) in \cite{trong-tuyen} are still
true with $C_p(E,\epsilon ,R)$ replaced by $D_p(E,\epsilon ,R)$. Then applying the proof of Theorem 4 in \cite{trong-tuyen} completes the proof of
Theorem \ref{TheoremMain}.
\end{proof}
{\bf Acknowledgement} The author would like to thank Professor Norman Levenberg and Professor Yuril Lyubarskii for their helpful comments.

\end{document}